\newcommand{\R}{\mathbb{{R}}}
\newtheorem{theorem}{Theorem}
\newtheorem{lemma}{Lemma}
\theoremstyle{definition}
\theoremstyle{remark}
\newtheorem{remark}[theorem]{Remark}
\begin{document}
\title{Fourier transforms on an amalgam type space}
\author{E. Liflyand}

\address{Department of Mathematics,
Bar-Ilan University, 52900 Ramat-Gan, Israel}
\email{liflyand@math.biu.ac.il}

\subjclass{Primary 42A38; Secondary 42B35}

\thanks{Key words and phrases: Amalgam space; Fourier transform;
integrability; bounded variation}

\begin{abstract}
We introduce an amalgam type space, a subspace of $L^1(\mathbb
R_+).$ Integrability results for the Fourier transform of a function
with the derivative from such an amalgam space are proved. As an
application we obtain estimates for the integrability of
trigonometric series.
\end{abstract}

\maketitle

\section{Introduction}

Let us start with a brief overview of the (100 years) old problem of
integrability of trigonometric series. Given a trigonometric series

\begin{eqnarray}\label{fouser} a_0/2+\sum_{n=1}^\infty(a_n\cos nx+b_n\sin nx),
\end{eqnarray}
find assumptions on the sequences of coefficients
$\{a_n\},\{b_n\}$ under which the series is the Fourier series of an
integrable function.
Frequently, the series
\begin{eqnarray}\label{cos} a_0/2+\sum\limits_{n=1}^\infty a_n\cos nx \end{eqnarray}
and
\begin{eqnarray}\label{sin} \sum\limits_{n=1}^\infty b_n\sin nx \end{eqnarray}
are investigated separately, since there is a difference in their
behavior. Usually, integrability of (\ref{sin}) requires additional
assumptions. However, one of the basic assumptions is that the
sequence $\{a_n\}$ or $\{b_n\}$ is of bounded variation, written
$\{a_n\}\in bv$ or $\{b_n\}\in bv$, that is, satisfies the condition
(for $\{a_n\}$; similarly for $\{b_n\}$)

\begin{eqnarray}\label{sbv}\sum\limits_{n=1}^\infty|\Delta a_n|<\infty,\end{eqnarray}
where $\Delta a_n=a_n-a_{n+1}$ and similarly for $\Delta b_n.$

One of the strongest known conditions that ensures (along with
certain natural assumptions) the integrability of trigonometric
series, pulled in \cite{AF, BuTa}, can be described as follows. Let
the space of sequences $\{d_n\}$ be endowed with the norm

\begin{eqnarray}\label{amseq1}\|\{d_n\}\|_{a_{1,2}}=\sum\limits_{m=0}^\infty
\left\{\sum\limits_{j=1}^\infty\left[\sum\limits_{n=j2^m}^{(j+1)2^m-1}
|d_n|\right]^2\right\}^{1/2}<\infty.                \end{eqnarray}
It is of amalgam nature; the reader can consult on the theory of
various amalgam spaces in \cite{Fei, FS, Heil}.

It is proved in \cite{AF} and \cite{BuTa} that if the coefficients
$\{a_n\}$ in (\ref{cos}) tend to $0$ as $n\to\infty$ and the
sequence $\{\Delta a_n\}$ is in $a_{1,2},$ then (\ref{cos})
represents an integrable function on $[0,\pi].$ In parallel,
if $\{\Delta b_n\}\in a_{1,2},$ then (\ref{sin}) represents an
integrable function on $[0,\pi]$ if and only if

\begin{eqnarray}\label{condsin}\sum\limits_{n=1}^\infty\frac{|b_n|}{n}<\infty.\end{eqnarray}

Let us introduce the function space $A_{1,2}$ similar to
(\ref{amseq1}). We say that a locally integrable function $g$
defined on $\mathbb R_+$ belongs to $A_{1,2}$ if

\begin{eqnarray}\label{amfun}\|g\|_{A_{1,2}}=\sum\limits_{m=-\infty}^\infty
\left\{\sum\limits_{j=1}^\infty\left[\int\limits_{j2^m}^{(j+1)2^m}|g(t)|
\,dt\right]^2\right\}^{1/2}dx<\infty.              \end{eqnarray}
This space is of amalgam nature as well, since each of the values we
integrate after denotes the norm in the Wiener amalgam space
$W(L^1,\ell^2)$ for functions $2^mg(2^mt),$ where $\ell^p$ is a
space of sequences $\{d_j\}$ endowed with the norm

\begin{eqnarray*} \|\{d_j\}\|_{\ell^2}=\biggl(\sum\limits_{j=1}^\infty|d_j|^2\biggr)^{1/2}\end{eqnarray*}
and the norm of a function $g:\mathbb R_+\to \mathbb C$ from the
amalgam space $W(L^1,\ell^2)$ is taken as

\begin{eqnarray*}\|\{\int\limits_j^{j+1}|g(t)|\,dt\}\|_{\ell^2}.\end{eqnarray*}
 In other words, we can rewrite (\ref{amfun}) as follows:

\begin{eqnarray}\label{amfunn}\|g\|_{A_{1,2}}= \sum\limits_{m=-\infty}^\infty
\,\|2^mg(2^m\cdot)\|_{W(L^1,\ell^2)} < \infty.\end{eqnarray}

The paper is organized as follows. Section 2 deals  with a
preliminary results on embedding of $A_{1,2}$ in $L^1$. In Section 3
we prove our main results on integrability of the Fourier
transforms. Then, in Section 4, we use some of the obtained results
to get an extension of the above mentioned results on integrability
of trigonometric series (\cite{AF, BuTa}).

Of course, the authors are aware of the multidimensional
generalization in \cite{AF1}. A multidimensional version of our
results is in work and will appear elsewhere.

Here and in what follows $\varphi\lesssim \psi$ means that
$\varphi\le C\psi$ with $C$ being an absolute constant.

\bigskip

\section{$A_{1,2}$ is a subspace of $L^1(\mathbb R_+)$}

We start with proving that the considered space is a subspace of
$L^1(\mathbb R_+).$

\begin{lemma}\label{bv} There holds $A_{1,2}\subset L^1(\mathbb R_+).$ \end{lemma}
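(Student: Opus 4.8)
The plan is to prove the stronger quantitative statement $\|g\|_{L^1(\mathbb R_+)}\le\|g\|_{A_{1,2}}$, from which the claimed inclusion is immediate. First I would slice the half-line into dyadic blocks, $\mathbb R_+=\bigcup_{k\in\mathbb Z}[2^k,2^{k+1})$, a union disjoint up to endpoints, so that
\[
\int_0^\infty|g(t)|\,dt=\sum_{k\in\mathbb Z}\int_{2^k}^{2^{k+1}}|g(t)|\,dt .
\]
Local integrability of $g$ guarantees each of these integrals, and more generally each quantity $c_j^{(m)}:=\int_{j2^m}^{(j+1)2^m}|g(t)|\,dt\ge 0$ appearing in \eqref{amfun}, is finite.

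The key observation is that the block $[2^k,2^{k+1})$ is precisely the interval $[j2^m,(j+1)2^m)$ for the choice $m=k$, $j=1$. Hence $\int_{2^k}^{2^{k+1}}|g(t)|\,dt=c_1^{(k)}$ is a single entry of the nonnegative sequence $\{c_j^{(k)}\}_{j\ge 1}$, and is therefore dominated by its $\ell^2$ norm:
\[
\int_{2^k}^{2^{k+1}}|g(t)|\,dt\le\Bigl(\sum_{j=1}^\infty\bigl(c_j^{(k)}\bigr)^2\Bigr)^{1/2}.
\]
Summing over $k\in\mathbb Z$ and comparing with \eqref{amfun} gives
\[
\int_0^\infty|g(t)|\,dt\le\sum_{k\in\mathbb Z}\Bigl(\sum_{j=1}^\infty\bigl(c_j^{(k)}\bigr)^2\Bigr)^{1/2}=\|g\|_{A_{1,2}},
\]
which finishes the proof.

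There is no genuinely hard step here; the point to be careful about is simply the bookkeeping — verifying that $m=k$, $j=1$ reproduces the block $[2^k,2^{k+1})$ exactly, and that letting the scale index $m$ vary with the block is essential. Indeed, within a single fixed scale $m$ one only controls the $\ell^2$ norm of $\{c_j^{(m)}\}_j$, never its $\ell^1$ norm, so freezing $m$ would not suffice; it is precisely the freedom to match each dyadic block to its own natural scale that makes the estimate work.
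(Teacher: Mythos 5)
Your proof is correct, and it is genuinely different from — and simpler than — the one in the paper. You observe that the dyadic block $[2^k,2^{k+1})$ is exactly the cell $[j2^m,(j+1)2^m)$ with $m=k$, $j=1$, so its contribution to $\|g\|_{L^1}$ is a single entry of the nonnegative sequence whose $\ell^2$ norm appears at scale $k$ in (\ref{amfun}); summing over $k$ gives $\|g\|_{L^1(\mathbb R_+)}\le\|g\|_{A_{1,2}}$ with constant $1$. The paper instead starts from the Fubini identity
\begin{eqnarray*}
\ln\frac{3}{2}\int_0^\infty|g(t)|\,dt=\int_0^\infty\frac{1}{x}\int_{2/x}^{3/x}|g(t)|\,dt\,dx,
\end{eqnarray*}
decomposes the $x$-integral dyadically, dominates the inner integral at scale $m$ by the weighted sum $3\sum_{j\ge1}\frac{1}{j}\int_{j2^{-m}}^{(j+1)2^{-m}}|g|$, and then applies the Cauchy--Schwarz inequality against the $\ell^2$ sequence $\{1/j\}$ to reach the $A_{1,2}$ norm. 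Both arguments are valid; yours is shorter, avoids Fubini and Cauchy--Schwarz altogether, and yields the sharp constant, while the paper's computation is essentially a warm-up in the technique (dyadic decomposition in $x$ paired with the reciprocal scale in $t$, then Cauchy--Schwarz in $j$) that reappears in the proof of the main theorem. Your closing remark is also on point: the estimate only works because each block is matched to its own scale, since at a fixed scale $\ell^2$ control of the cell integrals says nothing about their $\ell^1$ sum.
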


\begin{proof} We have

\begin{eqnarray*}\ln\frac{3}{2}\int\limits_0^\infty|g(t)|\,dt&=&\int\limits_0^\infty\frac{1}{x}
\int\limits_{2/x}^{3/x}|g(t)|\,dt\,dx\\
&\le&\sum\limits_{m=-\infty}^\infty\int\limits_{2^m}^{2^{m+1}}\frac{1}{x}
\int\limits_{2/x}^{3/x}|g(t)|\,dt\,dx.
\end{eqnarray*}
Since $\frac{2}{x}\ge2^{-m}$ and $\frac{3}{x}\le3(2^{-m}),$ the
right-hand side does not exceed

\begin{eqnarray*}\sum\limits_{m=-\infty}^\infty\int\limits_{2^m}^{2^{m+1}}\frac{1}{x}
\int\limits_{2^{-m}}^{3(2^{-m})}|g(t)|\,dt\,dx,    \end{eqnarray*}
which, in turn, does not exceed

\begin{eqnarray*}3\sum\limits_{m=-\infty}^\infty\int\limits_{2^m}^{2^{m+1}}\frac{1}{x}
\sum\limits_{j=1}^\infty\frac{1}{j}\int\limits_{j2^{-m}}^{(j+1)2^{-m}}|g(t)|\,dt\,dx.
\end{eqnarray*}
Indeed, the integral over $(2^{-m},3(2^{-m}))$ can be split into two
ones over $(2^{-m},2(2^{-m}))$ and $(2(2^{-m}),$ $3(2^{-m}))$, and
the first one is the first summand of the sum in $j$, while the
second one is twice the second summand of that sum.

Applying now the Schwarz-Cauchy-Bunyakovskii inequality yields

\begin{eqnarray*}\int\limits_0^\infty|g(t)|\,dt\lesssim
\sum\limits_{m=-\infty}^\infty\left\{\sum\limits_{j=1}^\infty
\left[\int\limits_{j2^m}^{(j+1)2^m}|g(t)|\,dt\right]^2\right\}^{1/2}dx,
\end{eqnarray*}
the desired bound. \hfill\end{proof}

\begin{remark}\label{rembv} The following different form of Lemma \ref{bv} is
useful in applications: if $f'\in A_{1,2}$, then $f$ is of bounded
variation, that is, $f'\in L^1(\mathbb R_+)$.
\end{remark}

Before proceeding to estimates of Fourier transforms let us compare
$A_{1,2}$ with a space important in such problems. In the study of
integrability properties of the Fourier transform, the following
$T$-transform of a function $g$ defined on $(0,\infty)$ is of
importance

\begin{eqnarray}\label{Ttr} Tg(t)=\int\limits_0^{t/2}
\frac{g(t-s)-g(t+s)}{s}\,ds= \int\limits_{t/2}^{3t/2}
\frac{g(s)}{t-s}\,ds,                            \end{eqnarray}
where the integral is understood in the Cauchy principal value
sense.

 Note that the $T$-transform is related to the Hilbert transform given by
 \begin{eqnarray}\label{Ttr} Hg(t)=\int\limits_0^{\infty}
\frac{g(t-s)-g(t+s)}{s}\,ds=\int\limits_{0}^{\infty}
\frac{g(s)}{t-s}\,ds.
\end{eqnarray}
This is revealed and discussed in \cite{L0}, \cite{Fr}, etc. For
example, it was shown in the mentioned works that for the Hilbert
transform $Hg$ of an integrable odd function, one has for $x>0$

\begin{eqnarray*}Hg(x)=Tg(x)+O(\Gamma(x)),\end{eqnarray*}
where $\int_{\mathbb R_+}|\Gamma(x)|\,dx\le\int_{\mathbb
R_+}|g(x)|\,dx.$ On the other hand, let us remark  that there is an
essential difference between $H$ and $T$-transforms since, e.g., for
the characteristic function $\chi_{[0, d]}$, $d > 0$, we have
(\cite{Fr})

$$\|H \chi[0, \delta]\|_{L^1(\R)}=\infty\qquad\mbox{and}\qquad \|T \chi[0, \delta]
\|_{L^1(\R_+)}= \delta\ln 3.$$

Now, the space $BT$ that has proved to be of importance and is
related to the real Hardy space, is the space of all the functions
that are integrable along with its $T$-transform. Indeed (see
\cite{L0}), a function of bounded variation on $\mathbb R_+$ with
the derivative from $BT$ has the integrable cosine Fourier
transform. As for the sine transform, an analog of (\ref{condsin})
should be added.

Note that the classes  $BT$ and $A_{1,2}$ are incomparable.
Counterexamples for sequences can be found in \cite{AF} and
\cite{Fr0}).

\bigskip

\section{Main results}

We study, for $\gamma=0$ or $1$, the Fourier transform

\begin{eqnarray*} \widehat {f}_\gamma(x)=\int\limits_0^\infty
f(t)\cos(xt-\frac{\pi\gamma}{2})\,dt.              \end{eqnarray*}
It is clear that $\widehat {f}_\gamma$ represents the cosine Fourier
transform in the case $\gamma=0$ while taking $\gamma=1$ gives the
sine Fourier transform.

\begin{theorem}\label{main} Let $f$ be locally absolutely continuous
on $\mathbb R_+$ and vanishing at infinity, that is,
$\lim\limits_{t\to\infty}f(t)=0,$ and $f'\in A_{1,2}$. Then for
$x>0$

\begin{eqnarray}\label{ftmain} \widehat {f}_\gamma(x)=
\frac{1}{x}f(\frac{\pi}{2x})\sin\frac{\pi\gamma}{2}+\theta\Gamma(x),
\end{eqnarray}
where $\gamma=0$ or $1$, and $\|\Gamma\|_{L^1(\mathbb
R_+)}\lesssim\|f'\|_{A_{1,2}}.$ \end{theorem}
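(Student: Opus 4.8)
The plan is to integrate by parts, peel off the stated main term, and split the remainder into an elementary piece and an oscillatory piece; the latter is handled by a dyadic decomposition together with an almost-orthogonality estimate that reproduces the amalgam norm almost literally. First I record that $f'\in A_{1,2}\subset L^1(\mathbb R_+)$ (Lemma~\ref{bv}), so $f$ is of bounded variation (Remark~\ref{rembv}) and bounded; hence the improper integral defining $\widehat{f}_\gamma$ converges and integration by parts against the antiderivative $\tfrac1x\sin(xt-\tfrac{\pi\gamma}2)$ of the kernel, letting the endpoint tend to infinity (the boundary term vanishes since $f\to0$), gives
$$\widehat{f}_\gamma(x)=\frac{f(0)}{x}\sin\frac{\pi\gamma}2-\frac1x\int_0^\infty f'(t)\sin\!\Big(xt-\frac{\pi\gamma}2\Big)\,dt .$$
Writing $f(0)=f(\tfrac{\pi}{2x})-\int_0^{\pi/(2x)}f'$ and absorbing the term $-\tfrac{\sin(\pi\gamma/2)}{x}\int_0^{\pi/(2x)}f'$ into the part of the last integral over $(0,\tfrac{\pi}{2x})$ yields $\widehat{f}_\gamma(x)=\tfrac1x f(\tfrac{\pi}{2x})\sin\tfrac{\pi\gamma}2+\Gamma_1(x)+\Gamma_2(x)$ with
$$\Gamma_1(x)=-\frac1x\int_0^{\pi/(2x)}\!\! f'(t)\Big[\sin\tfrac{\pi\gamma}2+\sin\!\Big(xt-\tfrac{\pi\gamma}2\Big)\Big]dt,\qquad \Gamma_2(x)=-\frac1x\int_{\pi/(2x)}^\infty\!\! f'(t)\sin\!\Big(xt-\tfrac{\pi\gamma}2\Big)dt .$$
It then suffices to prove $\|\Gamma_1\|_{L^1}+\|\Gamma_2\|_{L^1}\lesssim\|f'\|_{A_{1,2}}$, and one takes $\Gamma=|\Gamma_1|+|\Gamma_2|$ with $|\theta|\le1$. (For $\gamma=0$ the main term is $0$, so the splitting is optional: there $\Gamma_1+\Gamma_2=-\tfrac1x\int_0^\infty f'(t)\sin(xt)\,dt$.)

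The estimate for $\Gamma_1$ is elementary: on $(0,\tfrac{\pi}{2x})$ one has $xt<\tfrac\pi2$ and $|\sin\tfrac{\pi\gamma}2+\sin(xt-\tfrac{\pi\gamma}2)|\le xt$ (the bracket is $\sin xt$ for $\gamma=0$ and $1-\cos xt$ for $\gamma=1$), so $|\Gamma_1(x)|\le\int_0^{\pi/(2x)}t|f'(t)|\,dt$, whence by Tonelli $\int_0^\infty|\Gamma_1(x)|\,dx\le\int_0^\infty t|f'(t)|\cdot\tfrac{\pi}{2t}\,dt=\tfrac\pi2\|f'\|_{L^1}\lesssim\|f'\|_{A_{1,2}}$ by Lemma~\ref{bv}.

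The term $\Gamma_2$ is the crux. Put $g=f'$ and write $\int_0^\infty|\Gamma_2|\,dx=\sum_{n\in\mathbb Z}\int_{2^n}^{2^{n+1}}|\Gamma_2(x)|\,dx$. For fixed $n$ set $\delta=2^{-n}$; for $x\in[\delta^{-1},2\delta^{-1})$ the lower limit $\tfrac\pi{2x}$ lies in $(\tfrac\pi4\delta,\tfrac\pi2\delta)$, and splitting the $s$–integral over the dyadic blocks $B_j=[j\delta,(j+1)\delta)$ gives $\int_{\pi/(2x)}^\infty g(s)\sin(xs-\tfrac{\pi\gamma}2)\,ds=\sum_{j\ge0}c_j(x)$ with $|c_j(x)|\le\mu_j:=\int_{B_j}|g|$, only $B_0$ being truncated. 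Cauchy–Schwarz over the window $[\delta^{-1},2\delta^{-1})$ of length $\delta^{-1}$ gives
$$\int_{2^n}^{2^{n+1}}|\Gamma_2(x)|\,dx=\delta\int_{\delta^{-1}}^{2\delta^{-1}}\Big|\sum_j c_j(x)\Big|\,dx\le\delta^{1/2}\Big(\int_{\delta^{-1}}^{2\delta^{-1}}\Big|\sum_j c_j(x)\Big|^2dx\Big)^{1/2}.$$
Writing $\sin(xs-\tfrac{\pi\gamma}2)=\operatorname{Im}(e^{-i\pi\gamma/2}e^{ixs})$ and $s=(j+u)\delta$ exhibits $c_j(x)=\operatorname{Im}\big(e^{-i\pi\gamma/2}e^{ij\delta x}\phi_j(x)\big)$, where $|\phi_j|\le\mu_j$ and $|\phi_j'|\lesssim\delta\mu_j$ — here the truncation of $B_0$ produces no boundary term in $\phi_0'$ because $\sin(x\cdot\tfrac\pi{2x}-\tfrac{\pi\gamma}2)=\cos\tfrac{\pi\gamma}2$, which vanishes for $\gamma=1$ (for $\gamma=0$ one instead bounds the single term $\tfrac1x c_0$ directly, its contribution to $\int_0^\infty|\Gamma_2|$ being $\lesssim\|g\|_{L^1}$). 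Thus the $c_j$ are modulations, at the $\delta$–separated frequencies $j\delta$, of functions of bandwidth $O(\delta)$ in $x$; expanding $|\sum_j c_j|^2$ and integrating by parts once in $x$ bounds each off–diagonal contribution by $\lesssim\mu_j\mu_{j'}/(\delta|j-j'|)$, so that — using the $\ell^2$–boundedness of the matrix $1/|j-j'|$ (or a second integration by parts for extra decay) — $\int_{\delta^{-1}}^{2\delta^{-1}}|\sum_j c_j|^2\,dx\lesssim\delta^{-1}\sum_j\mu_j^2$. Hence $\int_{2^n}^{2^{n+1}}|\Gamma_2|\,dx\lesssim(\sum_{j\ge1}[\int_{j2^{-n}}^{(j+1)2^{-n}}|g|]^2)^{1/2}$, and summing over $n$ with $m=-n$ reproduces exactly
$$\int_0^\infty|\Gamma_2(x)|\,dx\lesssim\sum_{m\in\mathbb Z}\Big(\sum_{j\ge1}\Big[\int_{j2^m}^{(j+1)2^m}|g(t)|\,dt\Big]^2\Big)^{1/2}=\|f'\|_{A_{1,2}},$$
which finishes the proof. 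The only non-routine point is this almost-orthogonality estimate at each dyadic scale; I expect its verification — in particular making the truncation of the first block harmless, which is precisely where the cut-off point $\tfrac\pi{2x}$ and the value $\cos\tfrac{\pi\gamma}2$ enter, and controlling the borderline $1/|j-j'|$ decay — to be the main obstacle, while the reduction above and the estimate of $\Gamma_1$ are routine.
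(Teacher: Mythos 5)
Your reduction to the two remainder terms is correct and is essentially the paper's own: the integration by parts, the identity $f(0)=f(\tfrac{\pi}{2x})-\int_0^{\pi/(2x)}f'$, the bound $|\sin\tfrac{\pi\gamma}{2}+\sin(xt-\tfrac{\pi\gamma}{2})|\le xt$, and the Fubini step for $\Gamma_1$ all match the paper's computation leading to (\ref{near0}). The dyadic decomposition of $\Gamma_2$ in $x$, the use of $\tfrac1x\le 2^{-n}$, and the Cauchy--Schwarz step on each window are also exactly the paper's (\ref{hardint1})--(\ref{scb}). The divergence is in the key orthogonality estimate $\int_{\delta^{-1}}^{2\delta^{-1}}|\sum_j c_j|^2\lesssim\delta^{-1}\sum_j\mu_j^2$, and there your argument as written has a genuine gap. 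The matrix with entries $1/|j-j'|$ is \emph{not} bounded on $\ell^2$: it is a Toeplitz matrix whose symbol $\sum_{k\ne0}|k|^{-1}e^{ik\theta}=-2\log|2\sin(\theta/2)|$ is unbounded, so the off-diagonal bound $\mu_j\mu_{j'}/(\delta|j-j'|)$ does not sum to $\delta^{-1}\sum_j\mu_j^2$. Your fallback, a second integration by parts, improves only the integral term to $1/|j-j'|^2$ decay; the boundary terms produced by the \emph{first} integration by parts still carry only $1/(j-j')$ decay, so the problem reappears there. The estimate can be rescued -- the boundary terms form a bilinear form with the \emph{signed} kernel $1/(j-j')$ applied to $u_j=e^{ij\delta x_0}\phi_j(x_0)$, i.e.\ a discrete Hilbert transform, which \emph{is} $\ell^2$-bounded by Hilbert's inequality -- but that is precisely the non-routine point you left unverified, and the unsigned statement you invoke is false.

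The paper's Lemma~\ref{bale} circumvents this entirely and is worth comparing. It replaces $1/x$ by $|\sin(2^{-m}x)/x|$ (comparable on $[2^m,2^{m+1}]$), so that after Cauchy--Schwarz one is estimating the $L^2$ norm of $S_mG_m=\widehat{\check S_m\ast\check G_m}$; by Plancherel this is $\|\check S_m\ast\sum_j g_{m,j}\|_2$, where $\check S_m$ is (essentially) $\chi_{[0,2^{-m}]}$ and $g_{m,j}$ is supported on $[j2^{-m},(j+1)2^{-m}]$. The convolutions $\check S_m\ast g_{m,j}$ then have supports of bounded overlap, giving \emph{exact} orthogonality (after splitting into even and odd $j$), and Young's inequality with $(p,r,q)=(2,2,1)$ yields $\|\check S_m\ast g_{m,j}\|_2\le 2^{-m/2}\mu_j$. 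No almost-orthogonality matrix appears. One further small point in your write-up: in handling the truncated first block you bound its contribution by $\mu_0$ and integrate $\tfrac1x$ over the window; summing $\int_0^{2^{-n}}|g|$ over all $n\in\mathbb Z$ diverges, so you must use that $\pi/(2x)$ is comparable to $\delta$ -- only a fixed fraction of $B_0$ (or $B_1$) is cut, and these slivers have bounded overlap across scales -- which is what the paper's estimate (\ref{hardint2}) does explicitly via $\int_{1/x}^{\pi/(2x)}|f'|$.
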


\begin{proof}
Splitting the integral and integrating by parts, we obtain

\begin{eqnarray*} \widehat {f}_\gamma(x)=-
\frac{1}{x}f(\frac{\pi}{2x})\sin\frac{\pi}{2}(1-\gamma)+
\int\limits_0^{\frac{\pi}{2x}} f(t)\cos(xt-\frac{\pi\gamma}{2})\,dt
-\frac{1}{x}\int\limits_{\frac{\pi}{2x}}^\infty
f'(t)\sin(xt-\frac{\pi\gamma}{2})\,dt.
\end{eqnarray*}
Further,

\begin{eqnarray*}&\quad&\int\limits_0^{\frac{\pi}{2x}}
f(t)\cos(xt-\frac{\pi\gamma}{2})\,dt\\
&=&\int\limits_0^{\frac{\pi}{2x}}[ f(t)-f(\frac{\pi}{2x})]
\cos(xt-\frac{\pi\gamma}{2})\,dt+\int\limits_0^{\frac{\pi}{2x}}
f(\frac{\pi}{2x})\cos(xt-\frac{\pi\gamma}{2})\,dt\\
&=&-\int\limits_0^{\frac{\pi}{2x}}\left[\int\limits_t^{\frac{\pi}{2x}}
f'(s)\,ds\right]\cos(xt-\frac{\pi\gamma}{2})\,dt+\frac{1}{x}f(\frac{\pi}{2x})
\sin\frac{\pi}{2}(1-\gamma)+\frac{1}{x}f(\frac{\pi}{2x})\sin\frac{\pi\gamma}{2}\\
&=&\frac{1}{x}f(\frac{\pi}{2x})\sin\frac{\pi\gamma}{2}+\frac{1}{x}f(\frac{\pi}{2x})
\sin\frac{\pi}{2}(1-\gamma)+O\left(\int\limits_0^{\frac{\pi}{2x}}s|f'(s)|\,ds\right).
\end{eqnarray*}
Since

\begin{eqnarray}\label{near0}\int\limits_0^\infty\int\limits_0^{\frac{\pi}{2x}}s|f'(s)|\,ds\,dx
=\frac{\pi}{2}\int\limits_0^\infty|f'(s)|\,ds,       \end{eqnarray}
it follows from Lemma \ref{bv} that to prove the theorem it remains
to estimate

\begin{eqnarray}\label{hardint}\int\limits_0^\infty\frac{1}{x}
\left|\int\limits_{\frac{\pi}{2x}}^\infty
f'(t)\sin(xt-\frac{\pi\gamma}{2})\,dt\right|\,dx.   \end{eqnarray}
We can study

\begin{eqnarray}\label{hardint1}\sum\limits_{m=-\infty}^\infty\int\limits_{2^m}^{2^{m+1}}\frac{1}{x}
\left|\,\int\limits_{2^{-m}}^\infty
f'(t)\sin(xt-\frac{\pi\gamma}{2}) \,dt\right|\,dx   \end{eqnarray}
instead. Indeed,

\begin{eqnarray}\label{hardint2}\sum\limits_{m=-\infty}^\infty\int\limits_{2^m}^{2^{m+1}}\frac{1}{x}
\left|\,\int\limits_{2^{-m}}^{\frac{\pi}{2x}} f'(t)
\sin(xt-\frac{\pi\gamma}{2}) \,dt\right|\,dx&\le&
\sum\limits_{m=-\infty}^\infty\int\limits_{2^m}^{2^{m+1}}\frac{1}{x}
\int\limits_{\frac{1}{x}}^{\frac{\pi}{2x}}|f'(t)|\,dt\,dx\nonumber\\
&\le&\int\limits_0^\infty|f'(t)|\,dt.             \end{eqnarray}
Here and in other estimates with (\ref{hardint1}) there is no
difference between the sine and cosine, therefore all will follow
from the next result the statement and the proof of which are
inspired by Lemma 2 in \cite{AF}.

\begin{lemma}\label{bale} Let $g$ be an integrable function on
$\mathbb R_+.$ Then for $m=0,\pm1,\pm2,...$

\begin{eqnarray}\label{hardint3}\int\limits_{2^m}^{2^{m+1}}\frac{1}{x}
\left|\,\int\limits_{2^{-m}}^\infty g(t)e^{-ixt} \,dt\right|\,dx
\lesssim\left(\,\sum\limits_{j=1}^\infty\left[\int\limits_{j2^{-m}}^{(j+1)2^{-m}}
|g(t)|\,dt\right]^2\right)^{1/2}.
\end{eqnarray}\end{lemma}

\begin{proof}[Proof of Lemma \ref{bale}] We have

\begin{eqnarray*}\int\limits_{2^m}^{2^{m+1}}\frac{1}{x}
\left|\,\int\limits_{2^{-m}}^\infty g(t)e^{-ixt} \,dt\right|\,dx
\lesssim\int\limits_{2^m}^{2^{m+1}}\left|\frac{\sin2^{-m}x}{x}
\int\limits_{2^{-m}}^\infty g(t)e^{-ixt} \,dt\right|\,dx.
\end{eqnarray*}

By the Schwarz-Cauchy-Bunyakovskii inequality the right-hand side
does not exceed

\begin{eqnarray}\label{scb}&\quad&\left(\int\limits_{2^m}^{2^{m+1}}\,dx\right)^{1/2}
\left(\int\limits_{2^m}^{2^{m+1}}\,\left|\frac{\sin2^{-m}x}{x}
\int\limits_{2^{-m}}^\infty g(t)e^{-ixt} \,dt\right|^2\,dx\right)^{1/2}\nonumber\\
&=&2^{m/2}\left(\int\limits_{2^m}^{2^{m+1}}\,\left|\frac{\sin2^{-m}x}{x}
\int\limits_{2^{-m}}^\infty g(t)e^{-ixt}\,dt\right|^2\,dx
\right)^{1/2}.\end{eqnarray}             Denoting

\begin{eqnarray*} S_m(x)=\frac{\sin2^{-m}x}{x};\qquad
G_m(x)=\int\limits_{2^{-m}}^\infty g(t)e^{-ixt} \,dt,\end{eqnarray*}
we have to estimate

\begin{eqnarray*}2^{m/2}\left(\int\limits_{2^m}^{2^{m+1}}\,\left|S_m(x)G_m(x)
\right|^2\,dx \right)^{1/2}.                      \end{eqnarray*}
The product $S_m(x)G_m(x)$ is the convolution of the inverse Fourier
transforms, and we get

\begin{eqnarray}\label{conv}2^{m/2}\left(\int\limits_{2^m}^{2^{m+1}}\,
\left|\check{S_m}\ast\check{G_m}(x) \right|^2\,dx \right)^{1/2}.
\end{eqnarray}
Let us take into account that

\begin{eqnarray}\label{f2}\int_0^\infty\frac{\sin ax}{x}\,\cos yx
\,dx=\begin{cases}\frac\pi2, & y<a\\ \frac\pi4, & y=a\\
0, & y>a\end{cases};\end{eqnarray}

\begin{remark}\label{dir} This formula goes back to Fourier, see, e.g.,
Remark 12 in the cited literature of \cite{bochner}.\end{remark}

Since all our estimates will be for integrals, the value at one
point is of no importance. Therefore, using (\ref{f2}) for

\begin{eqnarray*}\check{G_m}(x)=\frac2\pi\int\limits_0^\infty
\frac{\sin2^{-m}u}{u}\cos xu\,du,                  \end{eqnarray*}
we may consider $\check{S_m}(x)$ to be $1$ for $x<2^{-m}$ and zero
otherwise.

Further,

\begin{eqnarray*}G_m(x)=\sum\limits_{j=1}^\infty\int\limits_{j2^{-m}}^{(j+1)2^{-m}}
g(t)e^{-ixt}\,dt=\sum\limits_{j=1}^\infty G_{m,j}(x),\end{eqnarray*}
where

\begin{eqnarray*}G_{m,j}(x)=\int\limits_{j2^{-m}}^{(j+1)2^{-m}}
g(t)e^{-ixt}\,dt.\end{eqnarray*}    Correspondingly,

\begin{eqnarray*}\check{G_m}(x)=\sum\limits_{j=1}^\infty g_{m,j}(x),\end{eqnarray*}
with $g_{m,j}(x)=g(x)$ when $j2^{-m}\le x\le(j+1)2^{-m}$ and zero
otherwise.

Recall now that Young's inequality for convolution reads as follows
(see, e.g., \cite[Ch.V, \S 1]{SW}): If $\varphi\in L^r(\mathbb R)$
and $\psi\in L^q(\mathbb R)$, then for $\frac{1}{r}+\frac{1}{q}=
\frac{1}{p}+1,$ $1\le p,q,r\le\infty,$

\begin{eqnarray}\label{young}\|\varphi\ast\psi\|_p\le\|\varphi\|_r\|\psi\|_q.\end{eqnarray}

To apply these, we may regard the supports $I_j$ of
$\check{S_m}(x)\ast\check{G_{m,j}}$ as non-overlapping. Otherwise,
we can consider separately sums over $j$-s odd only and over $j$-s
even, as in the proof of Lemma 2 in \cite{AF}, arrive at the same
upper bounds and then combine them.

Thus,

\begin{eqnarray*}2^{m/2}\left(\int\limits_{2^m}^{2^{m+1}}\,
\left|\check{S_m}\ast\check{G_m}(x) \right|^2\,dx \right)^{1/2}&\le&
2^{m/2}\left(\int\limits_{\mathbb
R}\,\left|\check{S_m}\ast\check{G_m}(x) \right|^2\,dx\right)^{1/2}\\
\le2^{m/2}2\left(\sum\limits_{j=1}^\infty\int\limits_{I_j}\,
\left|\check{S_m}\ast\check{G_m}(x) \right|^2\,dx \right)^{1/2}
&\lesssim& 2^{m/2}\left(\sum\limits_{j=1}^\infty\int\limits_{\mathbb
R}\,\left|\check{S_m}\ast\check{G_m}(x) \right|^2\,dx \right)^{1/2}.
\end{eqnarray*}
Applying Young's inequality with $\varphi=\check{S_m}$ and
$\psi=g_{m,j},$ $q=1$ and $p=r=2$, we obtain

\begin{eqnarray*}2^{m/2}\left(\sum\limits_{j=1}^\infty\int\limits_{\mathbb
R}\,\left|\check{S_m}\ast\check{G_m}(x) \right|^2\,dx \right)^{1/2}
\lesssim2^{m/2}\left(\sum\limits_{j=1}^\infty\,\|\check{S_m}\|_2^2\,
\|g_{m,j}\|_1^2\right)^{1/2}.\end{eqnarray*}        Since

\begin{eqnarray*}\|\check{S_m}\|_2=\left(\int\limits_0^{2^{-m}}
dx\right)^{1/2}=2^{-m/2},\end{eqnarray*}  we get the required bound

\begin{eqnarray*}\left(\,\sum\limits_{j=1}^\infty\left[\int\limits_{j2^{-m}}^{(j+1)2^{-m}}
|g(t)|\,dt\right]^2\right)^{1/2}.\end{eqnarray*} \hfill\end{proof}

Applying now the proven lemma, we complete the proof of the theorem,
since $m$ runs from $-\infty$ to $\infty$ and we can write $2^m$
instead of $2^{-m}$. \hfill\end{proof}

\bigskip

\section{Applications}

As an application, we obtain integrability results for (\ref{cos})
and (\ref{sin}) given above, for (\ref{sin}) even in a stronger,
asymptotic form.

We can relate this problem to a similar one for Fourier transforms
as follows. First, given series (\ref{cos}) or (\ref{sin}) with the
null sequence of coefficients being in an appropriate sequence
space, set for $x \in [n,n+1]$

\begin{eqnarray*} A(x)&=&a_n + (n-x) \Delta a_n, \qquad a_0 =0, \\
B(x)&=&b_n + (n-x) \Delta b_n.                    \end{eqnarray*}
So, we construct a corresponding function by means of linear
interpolation of the sequence of coefficients. Of course, one may
interpolate not only linearly, but there are no problems where this
might be of importance so far.

Secondly, for functions of bounded variation $\varphi$, to pass from
series to integrals and vice versa, we will make use of the
following result due to Trigub \cite[Th. 4]{T2} (see also \cite{TB};
an earlier version, for functions with compact support, is due to
Belinsky \cite{Be}):

\begin{eqnarray}\label{insevar} \sup\limits_{0<|x| \le \pi} \biggl| \int_{-\infty}^{+\infty}
\varphi (t) e^{- ixt} \, dt - \sum\limits_{-\infty}^{+\infty}
\varphi (k) e^{- ikx} \biggl| \;\;\lesssim\;  \|\varphi\|_{BV}.
\end{eqnarray}
The relation (\ref{insevar}) allows us to pass from estimating
trigonometric series (\ref{cos}) and (\ref{sin}) to estimating the
Fourier transform of $A(t)$ and $B(t),$ respectively.

\begin{theorem}\label{appl} If
the coefficients $\{a_n\}$ in (\ref{cos}) and $\{b_n\}$ in
(\ref{sin}) tend to $0$ as $n\to\infty,$ and the sequences $\{\Delta
a_n\}$ and $\{\Delta b_n\}$ are in $a_{1,2},$ then (\ref{cos})
represents an integrable function on $[0,\pi],$ and

\begin{eqnarray}\label{assin}\sum\limits_{n=1}^\infty b_n\sin nx
=\frac{1}{x}B(\frac{\pi}{2x})+\Gamma(x),          \end{eqnarray}
where $\int\limits_0^\pi |\Gamma(x)|\,dx\lesssim \|\{\Delta
b_n\}\|_{a_{1,2}}.$
\end{theorem}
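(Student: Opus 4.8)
The plan is to reduce Theorem \ref{appl} to Theorem \ref{main} applied to the interpolated functions $A(t)$ and $B(t)$, using the transference inequality (\ref{insevar}) to move between the series and the Fourier transform. First I would record that since $\{\Delta a_n\}\in a_{1,2}$ and $\{\Delta b_n\}\in a_{1,2}$, the piecewise-linear interpolants $A$ and $B$ have derivatives $A'=-\Delta a_n$, $B'=-\Delta b_n$ on each $[n,n+1]$, and one checks directly from the definitions (\ref{amseq1}) and (\ref{amfun}) that $\|A'\|_{A_{1,2}}\lesssim\|\{\Delta a_n\}\|_{a_{1,2}}$ and similarly for $B$; the only point to watch is the dyadic blocks with $2^m<1$, where the integral over $[j2^m,(j+1)2^m]$ of a function constant on integer intervals is controlled by a single term of the corresponding sum in (\ref{amseq1}), so these contribute a convergent geometric factor. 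Moreover $A$ and $B$ are locally absolutely continuous, vanish at infinity (because $a_n,b_n\to0$), so Theorem \ref{main} applies: with $\gamma=0$ we get $\widehat A_0(x)=\theta\Gamma_1(x)$ and with $\gamma=1$ we get $\widehat B_1(x)=\frac1x B(\frac{\pi}{2x})+\theta\Gamma_2(x)$, where $\|\Gamma_1\|_{L^1(\mathbb R_+)}\lesssim\|\{\Delta a_n\}\|_{a_{1,2}}$ and $\|\Gamma_2\|_{L^1(\mathbb R_+)}\lesssim\|\{\Delta b_n\}\|_{a_{1,2}}$.

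Next I would invoke (\ref{insevar}). Extending $A$ and $B$ to $\mathbb R$ as odd (for $B$) and even (for $A$) functions — so that the full-line Fourier transform reduces to the cosine, respectively sine, transform — and using that $A,B$ are of bounded variation by Remark \ref{rembv} with $\|A\|_{BV}\lesssim\|\{\Delta a_n\}\|_{a_{1,2}}$ (and similarly for $B$), the inequality (\ref{insevar}) gives that for $0<x\le\pi$ the trigonometric series $\sum a_n\cos nx$ and $\sum b_n\sin nx$ differ from $\widehat A_0(x)$ and $\widehat B_1(x)$ respectively by a function bounded by a constant multiple of the $BV$-norm, hence integrable on $[0,\pi]$ with the right bound. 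Combining this with the conclusions of Theorem \ref{main} yields that $\sum a_n\cos nx$ is integrable on $[0,\pi]$ and that $\sum b_n\sin nx=\frac1x B(\frac{\pi}{2x})+\Gamma(x)$ with $\int_0^\pi|\Gamma(x)|\,dx\lesssim\|\{\Delta b_n\}\|_{a_{1,2}}$, which is exactly (\ref{assin}).

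The main obstacle I anticipate is the careful bookkeeping in the equivalence $\|A'\|_{A_{1,2}}\lesssim\|\{\Delta a_n\}\|_{a_{1,2}}$: one must match the dyadic decomposition in the continuous norm (\ref{amfun}), where $m$ ranges over all of $\mathbb Z$, against the sequential norm (\ref{amseq1}), where $m\ge0$, and verify that the ``sub-integer'' scales $2^m$ with $m<0$ only produce, after summation, a harmless constant because a piecewise-constant function has its $W(L^1,\ell^2)$ norm at scale $2^m$ comparable to $2^m$ times its $\ell^2$ norm over the relevant integers, and these sum geometrically. A secondary technical point is justifying the odd/even extension and the applicability of (\ref{insevar}) on $(0,\pi]$ rather than all of $(0,\pi)$ — but, as in the remark following (\ref{f2}), altering the integrand on a null set (here, endpoints and the single point $x=\pi$) is irrelevant for $L^1$ estimates. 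Everything else is a direct citation of Theorem \ref{main}, Lemma \ref{bv}/Remark \ref{rembv}, and (\ref{insevar}).
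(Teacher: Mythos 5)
Your proposal is correct and follows essentially the same route as the paper: transference via (\ref{insevar}) to the interpolants $A,B$, verification that $\|A'\|_{A_{1,2}}\lesssim\|\{\Delta a_n\}\|_{a_{1,2}}$ (which the paper dismisses as ``routine calculations,'' displaying only the integer-scale blocks), and then an application of Theorem \ref{main} with $\gamma=0$ and $\gamma=1$. The only quibble is a harmless exponent in your sub-integer-scale bookkeeping: the $W(L^1,\ell^2)$ norm of the piecewise-constant $A'$ at scale $2^m$, $m<0$, is comparable to $2^{m/2}$ (not $2^m$) times $\|\{\Delta a_n\}\|_{\ell^2}$, which still sums geometrically against the $m=0$ block of $\|\{\Delta a_n\}\|_{a_{1,2}}$, so the conclusion stands.
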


\begin{proof} First of all, we observe that, as usual, no finite number of
coefficients can affect the behavior of trigonometric series.
Therefore, we have to estimate

\begin{eqnarray}\label{amfun1}\sum\limits_{m=1}^\infty
\left\{\sum\limits_{j=1}^\infty\left[\sum\limits_{n=j2^{-m}}^{(j+1)2^{-m}-1}
\int\limits_{n}^{n+1}|g(t)|\right]^2\right\}^{1/2}<\infty.
\end{eqnarray}
By (\ref{insevar}) we can deal with the functions $A(x)$ and $B(x)$
instead of the given sequences. Taking in (\ref{amfun1})
$g(t)=A'(t)$ that equals $-\Delta a_n$ when $n\le t<n+1,$ we prove
the first part of the theorem by fulfilling routine calculations.

We treat the remainder term in (\ref{assin}) in the same manner.
Finally, to make sure that (\ref{condsin}) follow from the obtained
asymptotic representation, we get

\begin{eqnarray*} \int\limits_1^N\frac{1}{x}|B(\frac{\pi}{2x})|\,dx
=\sum\limits_{n=1}^{N-1}\int\limits_n^{n+1}\frac{1}{x}|B(\frac{\pi}{2x})|\,dx
=\sum\limits_{n=1}^{N-1}\int\limits_{\frac{\pi}{2(n+1)}}^{\frac{\pi}{2n}}
\frac{1}{t}|B(t)|\,dt.                             \end{eqnarray*}
The right-hand side is obviously equivalent to
$\sum\limits_{n=1}^N\frac{|b_n|}{n}.$ Letting $N\to \infty$ leads to
(\ref{condsin}) and completes the proof.  \hfill\end{proof}

\bigskip

\section{Acknowledgements}

This paper was started during the visit of the author to the Centre
de Recerca Matem\`{a}tica, Bellaterra, Barcelona, Spain in February
2010. He appreciated the creative atmosphere of CRM and thanks ESF
Network Programme HCAA for supporting this visit.

\end{document}